\def\ba{\begin{array}}
	\def\ea{\end{array}}
\newcommand{\beq}{\begin{equation}}
\newcommand{\eeq}{\end{equation}}
\newcommand{\bq}{\begin{eqnarray}}
\newcommand{\eq}{\end{eqnarray}}
\newcommand{\bqn}{\begin{eqnarray*}}
	\newcommand{\eqn}{\end{eqnarray*}}
\newcommand{\bee}{\begin{enumerate}}
	\newcommand{\eee}{\end{enumerate}}
\newcommand{\bi}{\begin{itemize}}
	\newcommand{\ei}{\end{itemize}}
\newtheorem{definition}{\textbf{Definition}}
\newtheorem{corollary}{\textbf{Corollary}}
\newtheorem{lemma}{\textbf{Lemma}}
\newtheorem{remark}{\textbf{Remark}}
\newtheorem{theorem}{\textbf{Theorem}}
\definecolor{bleudefrance}{rgb}{0.19, 0.55, 0.91}
\definecolor{ao(english)}{rgb}{0.0, 0.5, 0.0}
\newcommand{\addcite}[0]{\ifthenelse{\boolean{showcomments}}
{\textcolor{purple}{(add cite(s)) }}{}}%
\newcommand{\enrique}[1]{  \ifthenelse{\boolean{showcomments}}
{\todo[inline,color=bleudefrance]{Enrique: #1}}{}}
\newcommand{\emmargin}[1]{\ifthenelse{\boolean{showcomments}}{\marginpar{\color{bleudefrance}\tiny EM: #1}}{}}
\newcommand{\emumargin}[1]{  \ifthenelse{\boolean{showcomments}}
{\todo[color=bleudefrance,size=\tiny]{EM: #1}} {}}
\newcommand{\aem}[1]{
\ifthenelse{\boolean{showedits}}
{\added[id=EM]{#1}}
{#1}
}
\newcommand{\chem}[2]{
\ifthenelse{\boolean{showedits}}
{\replaced[id=EM]{#1}{#2}}
{#1}
}
\newcommand{\dem}[1]{
\ifthenelse{\boolean{showedits}}
{\deleted[id=EM]{#1}}
{}
}
\title{\LARGE \bf
The Role of Strategic Load Participants in Two-Stage Settlement Electricity Markets
}
\author{Pengcheng You, Dennice F. Gayme, and Enrique Mallada
\thanks{This work was supported by ARO through contract W911NF-17-1-0092, US DoE EERE award DE-EE0008006, NSF through grants CNS 1544771, EPCN 1711188, AMPS 1736448 and CAREER 1752362, and Johns Hopkins University Discovery Award.}
\thanks{P. You, D. F. Gayme and E. Mallada are with the Whiting School of Engineering, Johns Hopkins University, Baltimore, MD 21218, USA
        {\tt\small \{pcyou,mallada,dennice\}@jhu.edu}}%
}
\begin{document}

\maketitle
\thispagestyle{empty}
\pagestyle{empty}

\begin{abstract}

Two-stage electricity market clearing is designed to maintain market efficiency under ideal conditions, e.g., perfect forecast and nonstrategic generation.
This work demonstrates that the individual strategic behavior of inelastic load participants in a two-stage settlement electricity market can deteriorate efficiency. 
Our analysis further implies that virtual bidding can play a role in alleviating this loss of efficiency by mitigating the market power of strategic load participants.
We use real-world market data from New York ISO to validate our theory.

\end{abstract}








\section{Introduction}

Electricity markets are designed to complement physical power systems by utilizing prices or other monetary incentives to motivate efficient system operation.
Wholesale electricity markets generally consist of two-stage settlement.
The first stage is a day-ahead market where participants buy or sell electricity through bids or offers on an hourly basis. An independent system operator (ISO) determines the hourly generation and load schedules along with the corresponding day-ahead clearing prices for the next day.
The second stage is a real-time market where participants trade in the same way at the real-time clearing prices on a smaller timescale, usually every five minutes, to offset any discrepancy between day-ahead commitments and actual generation/load.

The day-ahead and real-time markets are tightly coupled via time-varying supply, demand and prices \cite{pang2018temporally}.
The two-stage settlement is designed to maintain equal day-ahead and real-time prices such that no speculator is able to perform arbitrage, i.e., to enforce the so-called \emph{no-arbitrage} condition. 
However, the two stages are settled separately in practice and identical prices in the day-ahead and real-time markets are therefore not directly enforced \cite{NYISOprice}. 
The difference between a day-ahead price and its real-time counterpart is technically termed a price \emph{spread}. 
Any nonzero spread is generally considered a loss of efficiency \cite{hogan2016virtual}. 
Situations that result in systematic nonzero spreads include external factors, such as load forecast errors \cite{tang2016model}, non-scheduled generator shutdowns or line maintenance, as well as internal market power generally exercised by strategic generators \cite{ruhi2018opportunities}.

Transactions that are not intended for physical fulfillment in real time but holding financial positions for arbitrage are referred to as \emph{virtual bids}.
Virtual bids primarily consist of \emph{decrement} bids that buy electricity in the day-ahead market with the obligation to sell back the same amount in the real-time market, as well as \emph{increment} offers that work exactly in the opposite way \cite{PJMvirtualbidding}. See \cite{baltaoglu2019algorithmic,mashhour2011bidding1,mashhour2011bidding2,rahimiyan2016strategic} for various examples of virtual bidding strategies.
Virtual bidding is a valuable component of the two-stage settlement design that contributes to increasing market liquidity and mitigating market power by allowing extra asset-free participants to compete in electricity markets. This practice has proven, through both real observation \cite{hadsell2007impact} and theoretical analysis \cite{hogan2016virtual,isemonger2006benefits,celebi2010virtual,mather2017virtual}, to improve market efficiency by driving day-ahead and real-time prices to converge.

Despite the aforementioned studies, little attention has yet been paid to the strategic behavior of load participants in electricity markets, which may also play a role in degrading market efficiency. The load side is usually less regulated due to its inelasticity, which leaves load participants more freedom to make strategic decisions. 
Conceptually, even with inelastic demand, a load participant enjoys the flexibility of two-stage settlement, which potentially enables it to exercise market power.




In this paper, we look at the role of strategic inelastic load participants that take advantage of the two-stage settlement mechanism.
We first establish a simple two-stage settlement market model that assumes (fully regulated) nonstrategic generation to characterize the inherent connection between the day-ahead and real-time markets. 
The strategic behavior of load participants is then analyzed through a Cournot game. 
We further extend the framework to accommodate decrement bids in virtual bidding as a special case of strategic inelastic load participation in electricity markets. 
Real-world market data from New York ISO (NYISO) are employed for validation.

Our analysis has multiple implications.
\emph{First}, the proposed market model unveils the underlying mechanism that relates the no-arbitrage condition with market efficiency while maintaining realistic market settlement conditions such as the day-ahead cleared load being approximately equal to the total load for efficiency.  
\emph{Second}, we identify adverse impacts of strategic behavior by inelastic load participants that induces negative spreads and deteriorates efficiency in electricity markets, despite perfect forecast and nonstrategic generation.
\emph{Third}, we show that virtual bidding is an effective solution to alleviating the loss of market efficiency caused by strategic load participants.

The rest of the paper is organized as follows. Section~\ref{sec:model} introduces our electricity market model.
The role of strategic behavior by inelastic load participants is then analyzed in Section~\ref{sec:strategicload}. Empirical validation using real-world data follows in Section~\ref{sec:validation}. Section~\ref{sec:conclusion} concludes the paper.




\section{Electricity Markets Model}\label{sec:model}

In this section we describe the proposed electricity market model for the two-stage settlement mechanism. 
Consider an electricity market that consists of a day-ahead market and a real-time market.
Assume that the generation side is highly regulated and all generators are non-strategic, i.e., they reveal their true cost functions\footnote{In real electricity markets, piecewise linear generation offers are made as a proxy for true generation cost functions, which are assumed to be known by the ISO here for ease of analysis.}. The generators are categorized into two sets based on whether they are sufficiently fast to participate in the real-time market.
Let $\mathcal{F}$ and $\mathcal{S}$ respectively denote the sets of fast-responsive and slow-responsive generators. Slow-responsive generators can only participate in the day-ahead market while fast-responsive generators are able to participate in both markets.
For a fast-responsive generator $i\in\mathcal{F}$ that outputs an amount of power $x^f_i\ge0$, we assume a quadratic cost function of the form
\beq\label{eq:fastcostfunction}
C^f_i(x^f_i):=\frac{\alpha^f_{i}}{2}x^{f2}_i + \beta^f_i x^f_i ,
\eeq
where $\alpha^f_i>0$ and $\beta^f_i$ are constant cost coefficients. Similarly, we denote the cost function of a slow-responsive generator $j\in\mathcal{S}$ by
\beq\label{eq:slowcostfunction}
C^s_j(x^s_j):=\frac{\alpha^s_{j}}{2}x^{s2}_j + \beta^s_j x^s_j ,
\eeq
where $x^s_j\ge 0 $, $\alpha^s_j>0$ and $\beta^s_j$ are defined accordingly.
We then define the associated vectors as $x^f := (x^f_i,i\in\mathcal{F})$ and $x^s := (x^s_j,j\in\mathcal{S})$.


\subsection{Two-Stage Settlement}

The two-stage settlement mechanism meets a total inelastic load of $d>0$ by clearing it efficiently but separately in the day-ahead and real-time markets.
Denote the day-ahead cleared portion as $d^{DA}$ and the real-time cleared portion as $d^{RT}$ such that $d=d^{DA}+d^{RT}$.
In the slow-timescale day-ahead market, all of the generators in $\mathcal{F}$ and $\mathcal{S}$ are involved to clear the load $d^{DA}$ based on the following:

\noindent
\textbf{Day-ahead market clearing problem}
\begin{subequations}\label{eq:DayAheadMin}
\bq\label{eq:DayAheadMin.a}
\min_{x^f,x^s\ge 0} &&  \sum_{i\in\mathcal{F}} C^f_i(x^f_i) + \sum_{j\in\mathcal{S}} C^s_j(x^s_j)  \\
\label{eq:DayAheadMin.b}
\mathrm{s.t.}  && \sum_{i\in\mathcal{F}}x^f_i + \sum_{j\in\mathcal{S}} x^s_j = d^{DA}  \ : \ \lambda^{DA},
\eq
\end{subequations}
where $\lambda^{DA}$ is the dual Lagrange multiplier for the equality constraint \eqref{eq:DayAheadMin.b}.
Due to strong convexity, \eqref{eq:DayAheadMin} has a unique minimizer which we denote as $(x^{f*},x^{s*})$.      
Since \eqref{eq:DayAheadMin.b} is affine, the KKT conditions suggest that all of the participating generators should have an identical marginal cost that equals the optimal dual Lagrange multiplier\footnote{For illustration purposes, throughout this paper we restrict our considerations to the case where the constraints $x_i^f\ge 0, i\in\mathcal{F}$ and $x_j^s \ge 0,  j\in\mathcal{S}$ are not binding.}:
\bq\label{eq:DayAheadMarginalCost}
\lambda^{DA}= \alpha^f_i x^{f*}_i + \beta^f_i =\alpha^s_j x^{s*}_j +\beta^s_j, \quad \forall i \in\mathcal{F}, j\in\mathcal{S},
\eq
where we abuse $\lambda^{DA}$ to denote its optimum. $\lambda^{DA}$, technically termed the \emph{shadow price} in economics \cite{kirschen2018fundamentals}, is the minimum price to incentivize generators to output the desired amount of power, which captures marginal generation cost.

Combining \eqref{eq:DayAheadMin.b} and \eqref{eq:DayAheadMarginalCost} results in
\begin{subequations}\label{eq:DAprice}
\beq
 \lambda^{DA}  =   \alpha^{DA} d^{DA}  +    \beta^{DA} ,
\eeq
where 
\beq
\alpha^{DA} := \left(\sum_{i\in\mathcal{F}} \frac{1}{\alpha^f_i}   +     \sum_{j\in\mathcal{S}} \frac{1}{\alpha^s_j}  \right)^{-1}  ,
\eeq
and 
\beq
\beta^{DA} \! := \!  \left(\! \sum_{i\in\mathcal{F}} \frac{1}{\alpha^f_i}   +     \sum_{j\in\mathcal{S}} \frac{1}{\alpha^s_j} \! \right)^{-1} \!   \left(\! \sum_{i\in\mathcal{F}} \frac{\beta^f_i}{\alpha^f_i}  +\sum_{j\in\mathcal{S}} \frac{\beta^s_j}{\alpha^s_j}   \! \right)  .
\eeq
\end{subequations}
Here $\alpha^{DA}$ and $\beta^{DA}$ serve as the aggregate pricing coefficients.
The expressions in \eqref{eq:DAprice} implicitly reflect the \emph{elasticity of supply}, defined as the responsiveness of the quantity of power supplied to a change in its price, in the day-ahead market. Basically, given the market price $\lambda^{DA}$, the generators are willing to output a total amount of power $d^{DA}$. In other words, to clear the load $d^{DA}$ in the day-ahead market, the clearing price needs to be set at $\lambda^{DA}$.

The fast-timescale real-time market clears in the same way as the day-ahead market except that only fast-responsive generators in $\mathcal{F}$ are involved. Note that these generators have also participated in the day-ahead market and have already been scheduled to output $x^{f*}$. Therefore, in order to clear the load $d^{RT}$, the real-time market solves the following optimization problem: 

\noindent
\textbf{Real-time market clearing problem}
\begin{subequations}\label{eq:RTmin}
\bq\label{eq:RTmin.a}
\min_{\delta x^f} &&  \sum_{i\in\mathcal{F}} C^f_i( x^{f*}_i + \delta x^f_i )    \\\label{eq:RTmin.b}
\mathrm{s.t.}  && \sum_{i\in\mathcal{F}} \delta x^f_i = d^{RT}  \ : \  \lambda^{RT} .
\eq
\end{subequations}
Here $\delta x^f_i $ denotes the scheduled output adjustment from $x^{f*}$ for generator $i\in\mathcal{F}$ and $\delta x^f:= (\delta x^f_i,i\in\mathcal{F})$. $\lambda^{RT}$ is the (optimal) dual Lagrange multiplier for the equality constraint \eqref{eq:RTmin.b}.
Note that the cost of fast-responsive generators in the day-ahead market, i.e., $\sum_{i\in\mathcal{F}} ( \frac{\alpha^f_i}{2} x^{f*2}_i+ \beta^f_i x^{f*}_i  )$, should be subtracted from the objective function to represent the exact total cost for clearing the real-time load $d^{RT}$. We ignore this constant term for brevity.

We denote the unique minimizer of \eqref{eq:RTmin} as $\delta x^{f*}$ and deduce the following from the KKT conditions:
\beq\label{eq:RTmarginalcost}
\lambda^{RT}  = \alpha^f_i (x^{f*}_i  +  \delta x^{f*}_i  ) +  \beta^f_i  
 =\alpha^f_i  \delta x^{f*}_i  + \lambda^{DA}, \quad \forall i \in\mathcal{F},
\eeq
where the second equality follows directly from \eqref{eq:DayAheadMarginalCost}.
Substituting \eqref{eq:RTmarginalcost} into \eqref{eq:RTmin.b} yields
\begin{subequations}\label{eq:RTprice}
\beq
\lambda^{RT} =  \alpha^{RT} d^{RT} + \beta^{RT},
\eeq
where
\beq
\alpha^{RT} :=   \left(\sum_{i\in\mathcal{F}} \frac{1}{\alpha^f_i}   \right)^{-1} 
\eeq
and 
\beq
\beta^{RT} :=  \lambda^{DA}.
\eeq
\end{subequations}
Here $\alpha^{RT}$ and $\beta^{RT}$ are the aggregate pricing coefficients that embody the elasticity of supply in the real-time market. Meanwhile, \eqref{eq:RTprice} also unveils the inherent correlation between the day-ahead and real-time prices: the latter deviates from the former to account for the real-time cleared load. See Fig.~\ref{fig:pricecorrelation}. Formally, the price spread between the day-ahead and real-time prices is 
\bq\label{eq:spread}
 \lambda^{DA}- \lambda^{RT} =  - \left(\sum_{i\in\mathcal{F}} \frac{1}{\alpha^f_i}   \right)^{-1} d^{RT} .
\eq

\begin{figure}[thpb]
	\centering
	\includegraphics[width=0.45\textwidth]{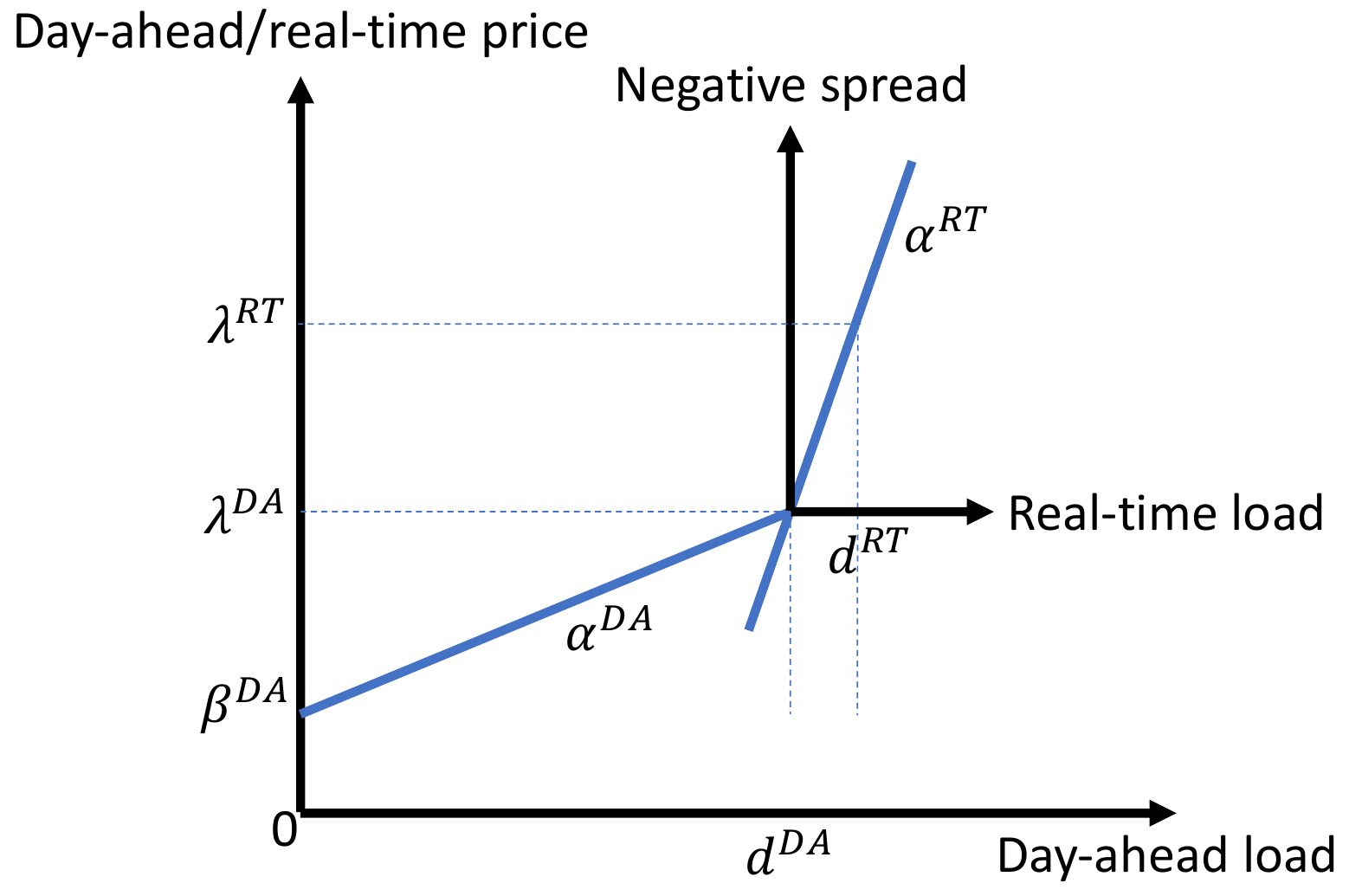}
	\caption{Correlation between day-ahead and real-time prices.}
	\label{fig:pricecorrelation}
\end{figure}

\begin{remark}
			Notably, according to \eqref{eq:DAprice}, \eqref{eq:RTprice}, we always have $\alpha^{RT} > \alpha^{DA}>0$, as Fig.~\ref{fig:pricecorrelation} illustrates, due to a smaller subset of generators involved in the real-time market. This is consistent with the observation that real-time prices are more volatile than day-ahead prices, since the real-time market typically has a smaller price elasticity of supply than the day-ahead market, i.e., the quantity of power supply in the real-time market is less sensitive to a change in its price than that in the day-ahead market.
\end{remark}

\subsection{Market Efficiency}

We next formalize our definition of market efficiency. Given all the available generators in $\mathcal{F}$ and $\mathcal{S}$, we define market efficiency as the minimum of social cost to meet the total inelastic load $d$, which is specifically realized by solving the following:

\noindent
\textbf{Social cost minimization problem}
\begin{subequations}\label{eq:SocialCostMin}
	\bq\label{eq:SocialCostMin.a}
	\min_{x^f,x^s\ge 0} &&  \sum_{i\in\mathcal{F}} C^f_i(x^f_i) + \sum_{j\in\mathcal{S}} C^s_j(x^s_j) \\ \label{eq:SocialCostMin.b}
	\mathrm{s.t.}  && \sum_{i\in\mathcal{F}}x^f_i + \sum_{j\in\mathcal{S}} x^s_j = d  \ : \ \lambda,
	\eq
\end{subequations}
i.e., jointly optimizing the dispatch of all the generators across the two markets. 
We define the (optimal) dual Lagrange multiplier $\lambda$ for the equality constraint \eqref{eq:SocialCostMin.b} and denote the unique minimizer of \eqref{eq:SocialCostMin} by $(x^{f\#},x^{s\#})$.    
The KKT conditions require
\bq\label{eq:efficiency}
\lambda = \alpha^{f}_i x^{f\#}_i + \beta^f_i  = \alpha^s_j x^{s\#}_j + \beta^s_j, \quad \forall i\in\mathcal{F},j\in\mathcal{S},
\eq
i.e., equal marginal cost for all of the participating generators, to achieve efficiency.

Recall that the day-ahead price equals the marginal cost of slow-responsive generators in \eqref{eq:DayAheadMarginalCost} while the real-time price equals the marginal cost of fast-responsive generators in \eqref{eq:RTmarginalcost}. By comparing \eqref{eq:DayAheadMarginalCost} and \eqref{eq:RTmarginalcost} with the indicator of market efficiency \eqref{eq:efficiency}, we arrive at the following theorem:
\begin{theorem}\label{teo:efficiency}
	In the two-stage settlement electricity market, efficiency can only be realized when 
	\beq\label{eq:indication1_efficiency}
	\lambda^{DA}= \lambda^{RT} = \lambda
	\eeq
	i.e., the day-ahead and real-time prices equalize, 
	which further implies
	\beq\label{eq:indication2_efficiency}
	 d^{DA} = d, \quad d^{RT} = 0 .
	\eeq
\end{theorem}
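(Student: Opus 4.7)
The plan is to relate the KKT conditions of the joint social cost minimization \eqref{eq:SocialCostMin} to those of the separate day-ahead and real-time clearings \eqref{eq:DayAheadMin} and \eqref{eq:RTmin}, and then invoke the spread identity \eqref{eq:spread} to conclude. First, I would observe that efficiency means that the realized two-stage dispatch $(x^{f*}+\delta x^{f*},\,x^{s*})$ coincides with the unique social optimum $(x^{f\#},x^{s\#})$. By strict convexity of the costs \eqref{eq:fastcostfunction}--\eqref{eq:slowcostfunction}, this social optimum is characterized by the equal-marginal-cost condition \eqref{eq:efficiency} at the common multiplier $\lambda$.

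Next, I would evaluate the marginal costs at the realized two-stage dispatch and compare them to $\lambda$. For any fast-responsive generator $i\in\mathcal{F}$, the marginal cost at the realized output is $\alpha^f_i(x^{f*}_i+\delta x^{f*}_i)+\beta^f_i$, which by \eqref{eq:RTmarginalcost} equals $\lambda^{RT}$. For any slow-responsive generator $j\in\mathcal{S}$, it is $\alpha^s_j x^{s*}_j+\beta^s_j$, which by \eqref{eq:DayAheadMarginalCost} equals $\lambda^{DA}$. Matching these against \eqref{eq:efficiency} forces both quantities to equal $\lambda$, yielding \eqref{eq:indication1_efficiency}. For \eqref{eq:indication2_efficiency}, I would substitute $\lambda^{DA}=\lambda^{RT}$ into the spread identity \eqref{eq:spread}: since $\alpha^{RT}:=\left(\sum_{i\in\mathcal{F}} 1/\alpha^f_i\right)^{-1}>0$, this forces $d^{RT}=0$, and the load decomposition $d=d^{DA}+d^{RT}$ then gives $d^{DA}=d$.

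I do not anticipate a genuine obstacle, since the argument reduces to comparing three sets of linear KKT stationarity conditions and invoking a single positivity constant. The one subtlety worth flagging is the need to carefully distinguish the realized fast-generator output $x^{f*}_i+\delta x^{f*}_i$ (scheduled across both stages) from its day-ahead-only component $x^{f*}_i$, since only the former must match $x^{f\#}_i$ at efficiency. As a sanity check, I would verify the converse direction implicit in the statement: when $d^{DA}=d$ and $d^{RT}=0$, problem \eqref{eq:DayAheadMin} coincides with \eqref{eq:SocialCostMin}, the real-time adjustment satisfies $\delta x^{f*}=0$, and consequently $\lambda^{DA}=\lambda^{RT}=\lambda$, so efficiency is indeed attained.
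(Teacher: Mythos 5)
Your proposal is correct and follows essentially the same route as the paper: the paper obtains the theorem precisely by comparing the stationarity conditions \eqref{eq:DayAheadMarginalCost} and \eqref{eq:RTmarginalcost} (which identify $\lambda^{DA}$ and $\lambda^{RT}$ with the slow- and fast-generator marginal costs at the realized dispatch) against the equal-marginal-cost condition \eqref{eq:efficiency}, and then reads off $d^{RT}=0$ from the spread identity \eqref{eq:spread}. Your added care in distinguishing $x^{f*}_i+\delta x^{f*}_i$ from $x^{f*}_i$, and the converse sanity check, are consistent with (and slightly more explicit than) the paper's argument.
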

	Theorem \ref{teo:efficiency} matches exactly the intuition of the two-stage settlement design: all (forecast) load should be cleared in the day-ahead market while the real-time market accounts for any load deviation from the forecast. 
	It also suggests that efficiency is consistent with the no-arbitrage condition between the two-stage markets, guaranteed by the zero spread from \eqref{eq:indication1_efficiency}, which is necessary for the market model to be realistic.
	
Similar models for the two-stage settlement mechanism have been used in \cite{bessembinder2002equilibrium,zhang2009analyzing,tang2016model}. 
However, our simple model further addresses several issues that are missing in these previous works,
e.g., the fact that the day-ahead cleared load should equal the total load is not accounted for in \cite{bessembinder2002equilibrium,zhang2009analyzing}; the correlation between the no-arbitrage condition and market efficiency is not demonstrated in \cite{tang2016model}.



\section{Strategic Load Participant}\label{sec:strategicload}

Given the two-stage settlement mechanism, an electricity market should clear all of the load in the day-ahead market and zero load in the real-time market in order to achieve efficiency. However, we observe that in the NYISO market there is an obvious positive bias for real-time loads throughout the year of 2018, as shown in Fig~\ref{fig:loadprofile}, which cannot be accounted for by uncertainties.
We attribute this loss of efficiency to strategic behavior by inelastic load participants and next investigate their market power by taking advantage of the two-stage settlement mechanism. Ideal assumptions of perfect forecast and nonstrategic generation are made to focus our attention on the impact of strategic load.
As we will see below, our analysis extends naturally to accommodate the role of virtual bidding.

\begin{figure}[thpb]
	\centering
	\includegraphics[width=0.45\textwidth]{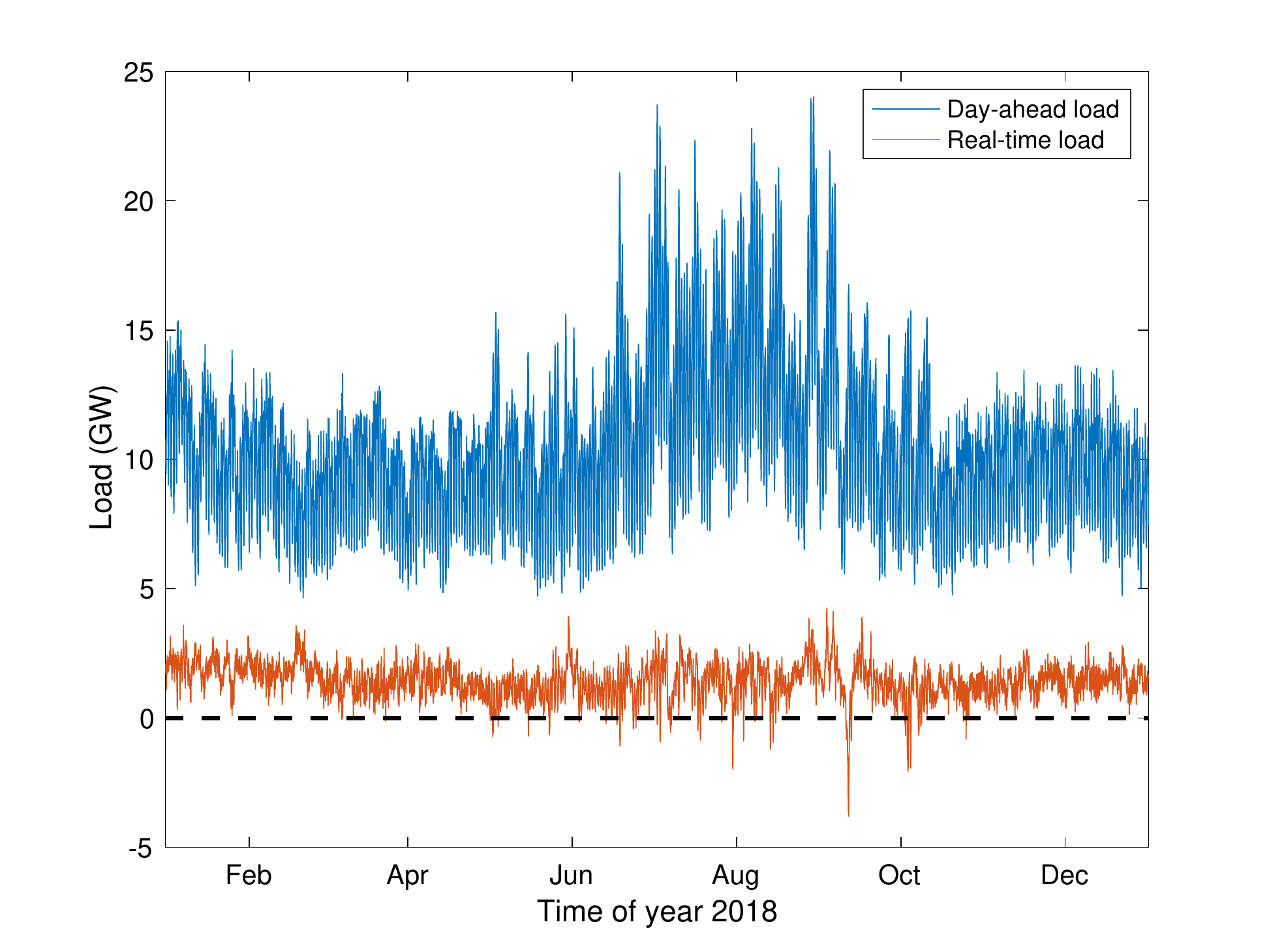}
	\caption{Day-ahead and real-time cleared loads, NYISO, 2018.}
	\label{fig:loadprofile}
\end{figure}

\subsection{Single Load}

We start with the simplest case where there is only one single inelastic load $d$ to be cleared. It has the option to participate in either one of or both of the day-ahead and real-time markets to meet its demand. The participation of the load in the two markets affects the market clearing prices, which in turn determine its cost. For analysis purposes, we assume the load has full knowledge of the supply elasticity of both markets, i.e., it knows the exact values of $\alpha^{DA},\beta^{DA},\alpha^{RT},\beta^{DA}$, e.g., through estimates based on long-term historical data\footnote{Since the set of participating generators in an electricity market and their cost functions are usually stable subject to subtle changes in the long run, it is reasonable to argue those coefficients that characterize the dependence of market prices on the amount of cleared load are approximately constant and easy to estimate.}.

A strategic load will anticipate the impact of its decision on the two markets and minimize the expenditure of purchasing electricity to meet its demand accordingly. Formally, it solves the following:

\noindent
\textbf{Single load cost minimization problem}\footnote{Note that on the load side we constrain nonnegative load participation in the day-ahead market to maintain the identity as a load.}
\begin{subequations}\label{eq:singleloadcostmin}
\bq\label{eq:singleloadcostmin.a}
\!\!\!\!\!\!\!\!\!\!\!\!\!   \min_{d^{DA}\ge 0,d^{RT}} && \!\!\!\!\!\! \!\!\!  \lambda^{DA}(d^{DA}) \cdot d^{DA} + \lambda^{RT}(d^{DA},d^{RT}) \cdot d^{RT} \\\label{eq:singleloadcostmin.b}
\mathrm{s.t.}  \  \  && \!\!\!\!\!\!  \!\!\! d^{DA} + d^{RT} =d .
\eq
\end{subequations}

\begin{theorem}
	The optimal load participation for a single load in the two-stage settlement electricity market is uniquely determined by
	\bq\label{eq:singloadloadminimizer}
	d^{DA*}=\left(1-\frac{\alpha^{DA}}{2\alpha^{RT}} \right) d, \quad d^{RT*}=\frac{\alpha^{DA}}{2\alpha^{RT}} \ d  ,
	\eq
	i.e., $d>d^{DA*}>\frac{1}{2}d>0$ and $\frac{1}{2}d>d^{RT*}>0$.
	Therefore, $\lambda^{RT}> \lambda^{DA}$ and a strictly negative spread, as defined in \eqref{eq:spread}, follow.
\end{theorem}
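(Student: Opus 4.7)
The plan is to turn the constrained bilevel-looking objective in \eqref{eq:singleloadcostmin} into an unconstrained one-variable quadratic by substituting the equality constraint and the closed-form price expressions from \eqref{eq:DAprice} and \eqref{eq:RTprice}. Concretely, I would eliminate $d^{RT}$ using $d^{RT} = d - d^{DA}$, and then plug in $\lambda^{DA}(d^{DA}) = \alpha^{DA} d^{DA} + \beta^{DA}$ and $\lambda^{RT}(d^{DA},d^{RT}) = \alpha^{RT} d^{RT} + \lambda^{DA}(d^{DA})$ into \eqref{eq:singleloadcostmin.a}. Expanding the resulting expression, the quadratic terms in $d^{DA}$ should cancel nicely against the cross-term coming from $\alpha^{DA} d^{DA} \cdot d^{RT}$, leaving an objective of the form $\alpha^{RT}(d - d^{DA})^2 + \alpha^{DA} d \cdot d^{DA} + \beta^{DA} d$, which is strongly convex in $d^{DA}$.

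Next I would take the first-order condition in $d^{DA}$, namely $-2\alpha^{RT}(d - d^{DA}) + \alpha^{DA} d = 0$, and solve the resulting linear equation to obtain $d - d^{DA*} = \tfrac{\alpha^{DA}}{2\alpha^{RT}} d$, which is exactly \eqref{eq:singloadloadminimizer}. Strong convexity (the second derivative equals $2\alpha^{RT} > 0$) gives uniqueness and ensures this stationary point is the global minimizer on the interior.

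It remains to verify that the nonnegativity constraint $d^{DA} \ge 0$ is not binding and to prove the claimed bounds. Here I would invoke the Remark following \eqref{eq:spread}, which establishes $\alpha^{RT} > \alpha^{DA} > 0$ since $\mathcal{F} \subsetneq \mathcal{F} \cup \mathcal{S}$. This yields $0 < \tfrac{\alpha^{DA}}{2\alpha^{RT}} < \tfrac{1}{2}$, so $d^{RT*} \in (0, d/2)$ and $d^{DA*} \in (d/2, d)$, confirming that the interior assumption made in dropping the inequality multipliers is consistent. Finally, the strictly negative spread is a direct substitution into \eqref{eq:spread}: since $d^{RT*} > 0$ and the coefficient $-(\sum_{i\in\mathcal{F}} 1/\alpha_i^f)^{-1} = -\alpha^{RT}$ is strictly negative, we get $\lambda^{DA} - \lambda^{RT} = -\alpha^{RT} d^{RT*} < 0$.

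I do not anticipate a genuine obstacle here; the computation is routine once the dependence $\lambda^{RT} = \alpha^{RT} d^{RT} + \lambda^{DA}$ (which inherits $d^{DA}$ through $\beta^{RT} = \lambda^{DA}$) is tracked carefully. The only subtlety worth double-checking is the cancellation of the $\alpha^{DA}(d^{DA})^2$ terms, as it is precisely this cancellation that yields the clean ratio $\alpha^{DA}/(2\alpha^{RT})$ rather than a more complicated expression involving $\beta^{DA}$.
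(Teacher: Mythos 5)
Your proposal is correct and follows essentially the same route as the paper's proof: relax $d^{DA}\ge 0$, substitute $d^{RT}=d-d^{DA}$ and the closed-form prices to obtain the single-variable strongly convex quadratic $\alpha^{RT}(d-d^{DA})^2+\alpha^{DA}d\,d^{DA}+\beta^{DA}d$, apply the first-order condition, and verify via $\alpha^{RT}>\alpha^{DA}>0$ that the constraint is slack and the spread is strictly negative. The cancellation you flag as the only subtlety is exactly the step the paper performs, so there is nothing further to add.
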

\begin{proof}
	First of all, we relax the constraint $d^{DA}\ge 0$ and show that it is not binding at the optimum.
	Given the explicit expressions of $\lambda^{DA}(d^{DA})$ and $\lambda^{RT}(d^{DA},d^{RT})$ in \eqref{eq:DAprice} and \eqref{eq:RTprice}, we can substitute \eqref{eq:singleloadcostmin.b} into \eqref{eq:singleloadcostmin.a} to reorganize the objective function in terms of $d^{DA}$ only:
	\beq\nonumber
	\begin{aligned}
		\ & \lambda^{DA}(d^{DA})\cdot d^{DA} + \lambda^{RT}(d^{DA},d^{RT}) \cdot d^{RT} \\
		=\ & \lambda^{DA}(d^{DA})\cdot d^{DA} + (\alpha^{RT} d^{RT} +  \lambda^{DA}(d^{DA})) \cdot d^{RT} \\
		=\ & (\alpha^{DA}d^{DA}+\beta^{DA}) d + \alpha^{RT} (d-d^{DA})^2 \\
		=\ & \alpha^{RT} d^{DA2} + (\alpha^{DA}d  -  2\alpha^{RT}d) d^{DA} + \beta^{DA}d + \alpha^{RT}d^2 . 
	\end{aligned}
	\eeq
	The unique minimizer of the above unconstrained optimization is straightforwardly obtained by the first-order optimality condition, i.e., \eqref{eq:singloadloadminimizer}. Recall $\alpha^{RT} > \alpha^{DA} > 0$, which implies $d>d^{DA*}>\frac{1}{2}d>0$ and $\frac{1}{2}d>d^{RT*}>0$. 
	The relaxed constraint is satisfied and \eqref{eq:singloadloadminimizer} is also the unique optimum of \eqref{eq:singleloadcostmin}.
\end{proof}

\begin{remark}
	The negative spread indicates the loss of market efficiency caused by the strategic behavior of a single inelastic load participant in the two-stage settlement electricity market. Meanwhile, the strictly positive load participation in the real-time market coincides with the observation of positive bias for real-time loads in Fig.~\ref{fig:loadprofile}.
\end{remark}


The single-load case serves as a toy example. 
Next we proceed to characterize the general case with market competition and analyze its impact on efficiency.

\subsection{Load-Side Cournot Competition}

We extend the above analysis to the case with multiple individual strategic loads, e.g., different local utility companies in a market.
Let $\mathcal{L}:=\{1,2,\dots,L\}$ be the set of these loads. Each load $l\in\mathcal{L}$ can independently determine its participation, $\vec d_l:= (d^{DA}_l\ge0,d^{RT}_l)$, in the day-ahead and real-time markets in order to satisfy its inelastic demand $d_l$ with 
\beq\label{eq:gameloadconstraint}
d^{DA}_l+d^{RT}_l=d_l,\quad l\in\mathcal{L} . 
\eeq
Let $\vec d := (\vec d_l, l\in \mathcal{L})$ be the aggregate decisions for all of the loads. Further denote the aggregate decisions for all of the loads except load $l$ as $\vec d_{-l}$.
Suppose that all of the loads are aware of the mechanism that determines market prices, i.e., 
\beq\label{eq:gamepricing}
\lambda^{DA}=\alpha^{DA} d^{DA} + \beta^{DA}, \quad \lambda^{RT}=\alpha^{RT} d^{RT} + \beta^{RT} , 
\eeq
where $d^{DA} := \sum_{l\in\mathcal{L}} d_l^{DA}$ and $d^{RT}:= \sum_{l\in\mathcal{L}} d_l^{RT}$. 
Define $d:= \sum_{l\in\mathcal{L}} d_l$ as the total load to be cleared.
Each load $l$ will aim to minimize its expenditure of purchasing electricity from the two markets to meet demand given other loads' decisions, i.e., 
\begin{subequations}\label{eq:gameloadexp}
\bq
 \!\!\!\!\!\!  \!\!\!\!\!\!   \min_{\vec{d}_l}  &&  \!\!\!\!\!\! c_l(\vec d_l;\vec{d}_{-l}): =\lambda^{DA}(\vec{d}) \cdot d^{DA}_l  + \lambda^{RT}(\vec{d}) \cdot d^{RT}_l   \\
 \!\!\!\!\!\!  \!\!\!\!\!\!  \mathrm{s.t.} && \!\!\!\!\!\!  \eqref{eq:gameloadconstraint} .
\eq
\end{subequations}

These loads compete in quantities of participation in the two markets that affect market clearing prices and seek to minimize individual cost, which can be formalized as a Cournot game:

\noindent
\textbf{Load-side Cournot game}\\
\emph{Players}: each load $l\in\mathcal{L}$;\\
\emph{Strategies}: load participation $\vec d_l$ in the day-ahead and real-time markets to satisfy \eqref{eq:gameloadconstraint};  \\
\emph{Costs}: expenditure of purchasing electricity $c_l(\vec d_l;\vec{d}_{-l})$.\\

\begin{definition}
$\vec d^*$ is a Nash equilibrium of the load-side Cournot game if it satisfies $c_l(\vec d^*) \le c_l(\vec d_l; \vec d^*_{-l})$ for any $\vec d_l$, $\forall l\in\mathcal{L}$.
\end{definition}
At a Nash equilibrium, no load has the incentive to deviate from its current decision unilaterally, given other loads' decisions. In order to characterize the Nash equilibrium of the load-side Cournot game, we first propose the following lemma:
\begin{lemma}\label{lemma:gameeq}
	There do not exist equilibria of the load-side Cournot game where $d_l^{DA*}=0$ for some $l\in\mathcal{L}$.
\end{lemma}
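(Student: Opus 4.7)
The plan is to argue by contradiction: suppose some equilibrium $\vec d^*$ satisfies $d_l^{DA*}=0$ for some $l\in\mathcal{L}$ (with $d_l>0$), and derive a direction of unilateral improvement for load $l$, contradicting the Nash property. The Karush--Kuhn--Tucker condition associated with the nonnegativity constraint $d_l^{DA}\ge 0$ requires that, at a boundary optimum $d_l^{DA*}=0$, the one-sided derivative $\partial c_l/\partial d_l^{DA}\big|_{d_l^{DA}=0}$ be nonnegative (otherwise an infinitesimal increase of $d_l^{DA}$ would strictly decrease the cost). The whole proof reduces to showing that this derivative is in fact strictly negative.

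To evaluate it, I would use \eqref{eq:gameloadconstraint} to eliminate $d_l^{RT}=d_l-d_l^{DA}$ and then plug the market-clearing price expressions \eqref{eq:gamepricing} into $c_l(\vec d_l;\vec d_{-l}^*)$. Fixing $\vec d_{-l}^*$, let $A:=\sum_{k\ne l}d_k^{DA*}$ and $B:=\sum_{k\ne l}d_k^{RT*}$, so
\begin{align*}
\lambda^{DA} &= \alpha^{DA}(d_l^{DA}+A)+\beta^{DA},\\
\lambda^{RT} &= \alpha^{RT}(d_l-d_l^{DA}+B)+\lambda^{DA}.
\end{align*}
Differentiating $c_l=\lambda^{DA}\,d_l^{DA}+\lambda^{RT}(d_l-d_l^{DA})$ in $d_l^{DA}$ and using $d\lambda^{DA}/dd_l^{DA}=\alpha^{DA}$ and $d\lambda^{RT}/dd_l^{DA}=\alpha^{DA}-\alpha^{RT}$ gives
\[
\frac{\partial c_l}{\partial d_l^{DA}}=\alpha^{DA}d_l^{DA}+(\lambda^{DA}-\lambda^{RT})+(\alpha^{DA}-\alpha^{RT})(d_l-d_l^{DA}).
\]

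Evaluating at the candidate equilibrium point $d_l^{DA}=0$, and recalling the spread identity \eqref{eq:spread} yields $\lambda^{DA}-\lambda^{RT}=-\alpha^{RT}d^{RT*}$ at $\vec d^*$. Thus
\[
\frac{\partial c_l}{\partial d_l^{DA}}\bigg|_{d_l^{DA}=0}
=-\alpha^{RT}d^{RT*}+(\alpha^{DA}-\alpha^{RT})\,d_l.
\]
The Remark after \eqref{eq:spread} ensures $\alpha^{RT}>\alpha^{DA}>0$, so the second term is strictly negative; moreover, since $d_l^{DA*}=0$ forces $d_l^{RT*}=d_l>0$, one has $d^{RT*}\ge d_l>0$, making the first term strictly negative as well. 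Hence the derivative is strictly negative, contradicting the KKT necessary condition. This proves no equilibrium has $d_l^{DA*}=0$ for any $l$.

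No step is genuinely hard; the only mild obstacle is keeping track of the implicit dependence of $\lambda^{RT}$ on $d_l^{DA}$ through $\beta^{RT}=\lambda^{DA}$, which doubles the effective rate of change but does not alter the sign. The proof does rely on the assumption $d_l>0$ (a degenerate load with $d_l=0$ trivially admits $d_l^{DA*}=0$); this should be tacitly assumed, consistent with the preceding discussion.
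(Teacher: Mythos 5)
Your overall strategy matches the paper's: argue by contradiction, note that a corner optimum $d_l^{DA*}=0$ requires the one-sided derivative $\partial c_l/\partial d_l^{DA}\ge 0$ there, and show this fails. Your derivative computation is also correct --- expanding $\lambda^{DA}-\lambda^{RT}=-\alpha^{RT}d^{RT}$ shows your expression coincides with the paper's first-order condition \eqref{eq:gameeq1}. But there is a genuine gap in the final sign argument. You claim that $d_l^{RT*}=d_l>0$ implies $d^{RT*}\ge d_l>0$, which tacitly assumes $\sum_{k\ne l}d_k^{RT*}\ge 0$. Nothing in the model guarantees this: only $d_k^{DA}\ge 0$ is constrained, so a competitor may over-purchase day-ahead ($d_k^{DA*}>d_k$) and hold a negative real-time position $d_k^{RT*}<0$. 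Indeed, even at the actual interior equilibrium \eqref{eq:gameeq3}, $d_k^{RT*}=\frac{L\alpha^{DA}}{(L+1)\alpha^{RT}}d_k-\frac{\alpha^{DA}}{(L+1)\alpha^{RT}}\sum_{j\ne k}d_j$ is negative whenever $d_k$ is small relative to the other demands. So the term $-\alpha^{RT}d^{RT*}$ in your derivative could in principle be positive, and the contradiction does not follow from the stated facts alone.

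Closing this gap is precisely the substantive content of the paper's proof: it invokes the first-order optimality conditions of the \emph{other} loads --- those in $\mathcal{L}'$ with $d_k^{DA*}>0$ --- sums them as in \eqref{eq:gameeqproof1}--\eqref{eq:gameeqproof2}, and concludes $\sum_{k\in\mathcal{L}'}d_k^{DA*}<\sum_{k\in\mathcal{L}}d_k$, i.e., the aggregate day-ahead purchase at any candidate equilibrium is strictly below total demand, hence $d^{RT*}>0$. With that bound in hand, your last display does yield the strictly negative derivative and the contradiction. You should add this aggregation step (or an equivalent bound on $\sum_{k\ne l}d_k^{RT*}$); as written, the proof asserts the key inequality rather than proving it.
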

Refer to the appendix for the proof.
Given Lemma \ref{lemma:gameeq}, the possibility of Nash equilibria with any of the constraints $d^{DA}_l \ge 0,~l\in\mathcal{L}$ binding is excluded and we next prove the existence and uniqueness of the Nash equilibrium of the load-side Cournot game by ignoring these constraints:
\begin{theorem}\label{prop:nasheq}
	In the two-stage settlement electricity market,
	there exists a unique Nash equilibrium of the load-side Cournot game, characterized by 
	\begin{small}
	\beq\label{eq:gameeq3}
	\begin{aligned}
		&\!\!\! d_l^{DA*} = \left(1 -\frac{L \alpha^{DA}}{(L+1)\alpha^{RT}}\right) d_l  +   \frac{\alpha^{DA}}{(L+1)\alpha^{RT}}  \sum_{k\in\mathcal{L}\backslash \{l\}} d_k , \\
		&\!\!\! d_l^{RT*} = \frac{L \alpha^{DA}}{(L+1)\alpha^{RT}} d_l - \frac{\alpha^{DA}}{(L+1)\alpha^{RT}}  \sum_{k\in\mathcal{L}\backslash \{l\}} d_k,
	\end{aligned}
	\eeq	\end{small}%
	for $\forall l \in\mathcal{L}$.
\end{theorem}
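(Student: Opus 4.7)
The plan is to leverage Lemma~\ref{lemma:gameeq} to treat the game as unconstrained and then characterize the Nash equilibrium through the standard route of best-response analysis. Since Lemma~\ref{lemma:gameeq} rules out any equilibrium with a binding $d_l^{DA}\ge 0$, I would drop these constraints, use \eqref{eq:gameloadconstraint} to eliminate $d_l^{RT}=d_l-d_l^{DA}$, and reduce each load's subproblem \eqref{eq:gameloadexp} to an unconstrained scalar optimization in $d_l^{DA}$, taking the aggregate decisions $d^{DA}=\sum_k d_k^{DA}$, $d^{RT}=d-d^{DA}$ into the price functions \eqref{eq:gamepricing}.

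Next, I would verify strong convexity of the reduced objective. Using $\lambda^{RT}=\alpha^{RT}d^{RT}+\lambda^{DA}$ (since $\beta^{RT}=\lambda^{DA}$), the cost $c_l$ is a quadratic in $d_l^{DA}$ whose second partial derivative works out to $2\alpha^{RT}>0$, so each load's best response is uniquely characterized by the first-order condition. Differentiating and using $\lambda^{DA}-\lambda^{RT}=-\alpha^{RT}d^{RT}$ from \eqref{eq:spread}, the FOC simplifies to the linear relation
\begin{equation}\nonumber
\alpha^{DA} d_l - \alpha^{RT} d_l^{RT} = \alpha^{RT} d^{RT}, \qquad l \in \mathcal{L}.
\end{equation}
This is a system of $L$ affine equations in $\{d_l^{RT}\}$ coupled through the aggregate $d^{RT}$.

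To solve the system, I would sum the FOCs over $l\in\mathcal{L}$ to obtain a single scalar equation for $d^{RT}$, yielding $d^{RT*}=\frac{\alpha^{DA}}{(L+1)\alpha^{RT}}\,d$. Substituting this back into each individual FOC gives the closed-form expression for $d_l^{RT*}$ stated in \eqref{eq:gameeq3}, and then $d_l^{DA*}=d_l-d_l^{RT*}$ yields the day-ahead formula. Existence follows because this candidate satisfies every best-response condition simultaneously; uniqueness follows because the linear system has full rank (its coefficient matrix, of the form $\alpha^{RT}(I+\mathbf{1}\mathbf{1}^\top)$ after rearrangement, is invertible) and each best response is unique by strong convexity.

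The main obstacle is bookkeeping rather than conceptual difficulty: one must carefully track the dual role of $d_l^{DA}$, which enters both its own load's objective explicitly and the aggregate prices $\lambda^{DA},\lambda^{RT}$, so that the FOC uses the correct total derivative. A secondary sanity check I would perform is verifying consistency with the single-load case of \eqref{eq:singloadloadminimizer} by specializing to $L=1$; both expressions in \eqref{eq:gameeq3} then reduce correctly, which lends confidence that the Cournot aggregation has been carried out without sign or indexing errors.
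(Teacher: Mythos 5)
Your proposal is correct and follows essentially the same route as the paper: invoke Lemma~\ref{lemma:gameeq} to drop the nonnegativity constraints, eliminate $d_l^{RT}$ via \eqref{eq:gameloadconstraint}, exploit strict convexity (second derivative $2\alpha^{RT}>0$) to reduce to first-order conditions, and solve the resulting coupled linear system by summing over $l$ — your aggregate-$d^{RT}$ substitution is just an explicit version of the paper's "combining \eqref{eq:gameeq2} for all $l$." The only small step worth adding is the observation (which the paper makes) that the resulting $d_l^{DA*}$ is strictly positive, confirming the unconstrained solution is feasible and hence the equilibrium of the original constrained game.
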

\begin{proof}
Given \eqref{eq:gameloadconstraint} and \eqref{eq:gamepricing}, the expenditure function $c_l(\vec d_l;\vec{d}_{-l})$ of each load $l$ in \eqref{eq:gameloadexp} can be rewritten explicitly in terms of $d^{DA}_l$ only as follows.
\begin{small}
\bq\nonumber
	 &&\!\!\! \lambda^{DA} d^{DA}_l  + \lambda^{RT} d^{RT}_l \\\nonumber
	&= & \!\!\! ( \alpha^{DA} \sum_{k\in\mathcal{L}} d_k^{DA} + \beta^{DA} ) d_{l}^{DA}   + ( \alpha^{RT} (d - \sum_{k\in\mathcal{L}} d_k^{DA}) + \beta^{RT} ) (d_l - d_{l}^{DA} ) \\\nonumber
	&=  &\!\!\! ( \alpha^{DA} \sum_{k\in\mathcal{L}} d_k^{DA} + \beta^{DA} )  d_l  + \alpha^{RT} (d - \sum_{k\in\mathcal{L}} d_k^{DA})(d_l-d_l^{DA})  	\\\nonumber
	&=  &\!\!\! \alpha^{DA} d_l d^{DA}_l   + \alpha^{RT} \sum_{k\in\mathcal{L}\backslash \{l\}} (d_k - d_k^{DA}) (d_l - d_l^{DA})  +  \alpha^{RT} (d_l-d_l^{DA})^2 + \alpha^{DA}  \sum_{k\in\mathcal{L}\backslash \{l\}} d_k^{DA} d_l + \beta^{DA} d_l  ,
\eq
\end{small}%
where the second equality follows from $\beta^{RT} = \lambda^{DA}$.
Given Lemma \ref{lemma:gameeq} and the strict convexity of the expenditure function $c_l(\vec d_l;\vec{d}_{-l})$ in $d_l^{DA}$, the Nash equilibrium of the load-side Cournot game can be characterized by imposing the first-order optimality condition on all the loads, i.e., for $\forall l\in\mathcal{L}$,
\beq\label{eq:gameeq1}
 \alpha^{DA} d_l 
 -   \alpha^{RT} \!\!\!\!\!  \sum_{k\in\mathcal{L}\backslash \{l\}}  \!\!\!\! (d_k-d_k^{DA*})   -  2\alpha^{RT} (d_l-d_l^{DA*})    = 0  , 
\eeq
or equivalently,
\beq\label{eq:gameeq2}
d_l^{DA*} =\left( 1-\frac{\alpha^{DA}}{2\alpha^{RT}} \right)d_l   +  \frac{1}{2}\sum_{k\in\mathcal{L}\backslash \{l\}} (d_k-d_k^{DA*})   .
\eeq
Note that the first term on the right-hand side of \eqref{eq:gameeq2} is exactly the individual optimum without any competitors in \eqref{eq:singloadloadminimizer}, while the second term represents the influence of competition. Intuitively, if other loads participate more in the real-time market, load $l$ will increase its participation in the day-ahead market to hedge against rising real-time prices.

Combining \eqref{eq:gameeq2} for all $\l\in\mathcal{L}$ naturally yields the unique solution \eqref{eq:gameeq3}. We can readily observe $d^{DA*}_l > 0$, which is consistent with Lemma \ref{lemma:gameeq}.
The theorem follows. 
\end{proof}

	By summing \eqref{eq:gameeq3} over $\mathcal{L}$ and reorganizing the expression, we are able to derive the following: 
	\begin{corollary}\label{cor:multiloadeq}
		At the Nash equilibrium of the load-side Cournot game, the total day-ahead load and real-time load are respectively 
	\beq\label{eq:gameDAclearedload}
	\begin{split}
	\sum_{l\in\mathcal{L}} d_l^{DA*} &= \left(1 -\frac{\alpha^{DA}}{(L+1)\alpha^{RT}}\right) \sum_{l\in\mathcal{L}} d_l  , \\
	\sum_{l\in\mathcal{L}} d_l^{RT*} &= \frac{\alpha^{DA}}{(L+1)\alpha^{RT}} \sum_{l\in\mathcal{L}} d_l	 ,
	\end{split}
	\eeq	
	which implies $d > d^{DA*}  > \frac{L}{L+1}   d$ and $\frac{1}{L+1}   d >  d^{RT*} > 0 $. 
	Therefore, $\lambda^{RT}> \lambda^{DA}$ and a strictly negative spread follow.
	
	\end{corollary}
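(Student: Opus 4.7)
The plan is to treat this as a direct algebraic consequence of Theorem~\ref{prop:nasheq}, followed by sign/inequality bookkeeping that invokes the supply-elasticity inequality $\alpha^{RT}>\alpha^{DA}>0$ from Remark~1 and the spread formula \eqref{eq:spread}.

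First, I would sum the expression for $d_l^{DA*}$ in \eqref{eq:gameeq3} over $l\in\mathcal{L}$. The only nontrivial manipulation is the double sum $\sum_{l\in\mathcal{L}}\sum_{k\in\mathcal{L}\setminus\{l\}} d_k$, which, by interchanging the order of summation, equals $(L-1)\sum_{l\in\mathcal{L}} d_l = (L-1)d$. Plugging this in yields
\[
\sum_{l\in\mathcal{L}} d_l^{DA*} = \left(1-\frac{L\alpha^{DA}}{(L+1)\alpha^{RT}}\right)d + \frac{(L-1)\alpha^{DA}}{(L+1)\alpha^{RT}}\, d,
\]
and collecting the coefficients of $d$ gives the first identity in \eqref{eq:gameDAclearedload}. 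For the real-time total I would either repeat the same summation on the expression for $d_l^{RT*}$ or, more cheaply, invoke the feasibility constraint $\sum_l(d_l^{DA*}+d_l^{RT*})=d$ from \eqref{eq:gameloadconstraint} to read off $d^{RT*}$ directly.

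Next, to establish the bounds $d>d^{DA*}>\tfrac{L}{L+1}d$ and $\tfrac{1}{L+1}d>d^{RT*}>0$, I would observe that Remark~1 gives $0<\alpha^{DA}<\alpha^{RT}$, so the factor $\tfrac{\alpha^{DA}}{(L+1)\alpha^{RT}}$ lies strictly in the open interval $(0,\tfrac{1}{L+1})$. Multiplying by $d>0$ bounds $d^{RT*}$, and the complementary bound on $d^{DA*}=d-d^{RT*}$ is immediate.

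Finally, the sign of the spread is essentially free once $d^{RT*}>0$ is established: equation \eqref{eq:spread} states
\[
\lambda^{DA}-\lambda^{RT} = -\Bigl(\sum_{i\in\mathcal{F}}\tfrac{1}{\alpha^f_i}\Bigr)^{-1} d^{RT*},
\]
which is strictly negative since the leading coefficient is positive and $d^{RT*}>0$. I don't anticipate any real obstacle here; the argument is purely an algebraic aggregation of Theorem~\ref{prop:nasheq} combined with the strict inequality $\alpha^{RT}>\alpha^{DA}$, and the only place that requires any care is the order-swap in the double sum that converts individual best responses into the aggregate demand.
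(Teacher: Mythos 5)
Your proposal is correct and follows essentially the same route as the paper, which likewise obtains the corollary by summing \eqref{eq:gameeq3} over $\mathcal{L}$, reorganizing, and then invoking $\alpha^{RT}>\alpha^{DA}>0$ together with \eqref{eq:spread} for the sign of the spread. The only difference is that you spell out the double-sum interchange and the feasibility shortcut for $d^{RT*}$ explicitly, which the paper leaves implicit.
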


\begin{remark}
	Notably, the optimal load participation \eqref{eq:singloadloadminimizer} in the single-load case is a special case of \eqref{eq:gameDAclearedload} where $L=1$. 
	Corollary \ref{cor:multiloadeq} generalizes the conclusion to multi-load cases, and specifically it states that the strategic behavior of load participants even with inelastic demand can reduce market efficiency by taking advantage of the two-stage settlement mechanism.
	However, as the number of load participants $L$ increases, the total day-ahead load approaches the total load and the spread diminishes towards zero, meaning the restoration of market efficiency.
	This is consistent with the intuition that when there are infinite participants, the individual impact on market prices becomes negligible and therefore the market power of each strategic load vanishes, which drives the market to be competitive.
\end{remark}




\subsection{The Role of Virtual Bidding}

Virtual bidding is an essential part of competitive electricity markets as it mitigates market power.
Virtual bidders profit from arbitrage on nonzero spreads. 
As analyzed above, we have demonstrated that systematic negative spreads can result from the strategic behavior of load participants. However, through an extended analysis of the prior load-side Cournot competition, we now show decrement bids in virtual bidding that act like load participation play an important role in driving these spreads to zeros.

In particular, consider a set $\mathcal{V}:=\{1,2,\dots,V\}$ of virtual bidders. They individually determine their participation $(d_v^{DA},d_v^{RT}), v\in\mathcal{V}$ to compete in the day-ahead and real-time markets in pursuit of arbitrage. However, they differ from real load participants $l\in\mathcal{L}$ in that no real demand needs to be satisfied, i.e., $d_v=0$, $v\in\mathcal{V}$. 
The following theorem characterizes the involvement of these virtual bidders in the load-side Cournot game:
\begin{theorem}
In the two-stage settlement electricity market, there exists a unique Nash equilibrium of the load-side Cournot game with virtual bidders, where the virtual bids are given by
\beq\label{eq:virtualgameeq}
\begin{aligned}
	& d_v^{DA*} =     \frac{\alpha^{DA}}{(L+V+1)\alpha^{RT}}  \sum_{l\in\mathcal{L}} d_l , \\
	& d_v^{RT*} =  -   \frac{\alpha^{DA}}{(L+V+1)\alpha^{RT}}  \sum_{l\in\mathcal{L}} d_l  ,
\end{aligned}
\eeq
for $\forall v\in\mathcal{V}$.
\end{theorem}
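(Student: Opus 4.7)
My plan is to extend the Cournot analysis from Theorem~\ref{prop:nasheq} by augmenting the set of strategic players with the $V$ virtual bidders. Each real load $l\in\mathcal{L}$ still satisfies $d^{DA}_l+d^{RT}_l=d_l$ and minimizes $c_l=\lambda^{DA}d^{DA}_l+\lambda^{RT}(d_l-d^{DA}_l)$, while each virtual bidder $v\in\mathcal{V}$ obeys the zero-demand constraint $d^{DA}_v+d^{RT}_v=0$, so after substituting $d^{RT}_v=-d^{DA}_v$ its cost collapses to $c_v=(\lambda^{DA}-\lambda^{RT})\,d^{DA}_v$. I will let $d^{DA}$ denote the aggregate day-ahead participation of all real loads and virtual bidders combined, and note that the virtual bids cancel in the balance so the aggregate real-time participation equals $d-d^{DA}$ where $d:=\sum_{l\in\mathcal{L}}d_l$.

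The key simplification that makes the system tractable is the identity $\lambda^{RT}-\lambda^{DA}=\alpha^{RT}(d-d^{DA})$, which follows directly from \eqref{eq:RTprice} once one recalls that $\beta^{RT}$ is itself the realized day-ahead price. Substituting into $c_l$ and $c_v$ produces quadratic functions of the player's own decision variable, both with a coefficient $+\alpha^{RT}$ on the quadratic term, so each best-response problem is strictly convex and admits a unique interior solution. An extension of Lemma~\ref{lemma:gameeq}, by the same perturbation argument, rules out $d^{DA}_l=0$ at equilibrium for any real load, so the nonnegativity constraint is non-binding and can be dropped.

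I would then derive the first-order conditions, carefully propagating the dependence of $\beta^{RT}=\lambda^{DA}$ on $d^{DA}$ (including the player's own variable). For each real load this yields
\begin{equation*}
\alpha^{DA}d_l=\alpha^{RT}(d_l-d^{DA*}_l)+\alpha^{RT}(d-d^{DA*}),
\end{equation*}
and for each virtual bidder
\begin{equation*}
\alpha^{RT}\,d^{DA*}_v=\alpha^{RT}(d-d^{DA*}).
\end{equation*}
The second equation already says every virtual bidder picks the same value $d^{DA*}_v=d-d^{DA*}$, so by symmetry $(V+1)\,d^{DA*}_v=d-\sum_{l\in\mathcal{L}}d^{DA*}_l$. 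Summing the load FOC over $\mathcal{L}$ and substituting this relation collapses the system to the single linear equation $\alpha^{DA}\,d=\alpha^{RT}(L+V+1)\,d^{DA*}_v$, from which \eqref{eq:virtualgameeq} follows immediately, together with $d^{RT*}_v=-d^{DA*}_v$.

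The step I expect to be most delicate is the bookkeeping of partial derivatives in the FOCs: $\lambda^{RT}$ depends on each $d^{DA}_v$ both through the explicit term $\alpha^{RT}d^{RT}$ \emph{and} through $\beta^{RT}=\lambda^{DA}=\alpha^{DA}d^{DA}+\beta^{DA}$, and overlooking this second channel is the natural place to introduce a spurious factor $\alpha^{DA}+\alpha^{RT}$ in the denominator that would corrupt the final formula. Once this is handled correctly, uniqueness and existence both follow from strict convexity of each best-response problem combined with the invertibility of the aggregate linear system in the variables $(d^{DA*}_l)_{l\in\mathcal{L}}$ and $d^{DA*}_v$, mirroring the argument of Theorem~\ref{prop:nasheq}.
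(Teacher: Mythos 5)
Your proposal is correct: the first-order conditions you derive for both the real loads and the virtual bidders are right (in particular you correctly track the dependence of $\beta^{RT}=\lambda^{DA}$ on each player's own decision, which is indeed the easy place to go wrong), and the aggregation step collapses to $\alpha^{DA}d=\alpha^{RT}(L+V+1)d_v^{DA*}$ exactly as you claim, yielding \eqref{eq:virtualgameeq}. The paper itself omits a proof of this theorem, but your argument is precisely the natural extension of its proof of Theorem~\ref{prop:nasheq}, so the only piece left to write out explicitly is the extension of Lemma~\ref{lemma:gameeq} that you invoke (which does go through by the same contradiction argument, since the virtual bidders' best responses $d_v^{DA*}=d-d^{DA*}$ still force $d^{DA*}<d$ at any putative boundary equilibrium).
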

Here $d_v^{DA*} >0 $ represents a decrement bid. 
\begin{corollary}
At the Nash equilibrium of the load-side Cournot game with virtual bidders, the total day-ahead load and real-time load are respectively
\begin{small}
\beq\label{eq:virtualgameDAclearedload}
\begin{split}
\sum_{l\in\mathcal{L}} d_l^{DA*}  +  \sum_{v\in\mathcal{V}} d_v^{DA*} &= \left(1 -\frac{\alpha^{DA}}{(L+V+1)\alpha^{RT}}\right)   \sum_{l\in\mathcal{L}} d_l  , \\
\sum_{l\in\mathcal{L}} d_l^{RT*}  +  \sum_{v\in\mathcal{V}} d_v^{RT*} &= \frac{\alpha^{DA}}{(L+V+1)\alpha^{RT}}   \sum_{l\in\mathcal{L}} d_l  ,
\end{split}
\eeq\end{small}%
which implies $d > d^{DA*}  > \frac{L+V}{L+V+1}   d$ and $\frac{1}{L+V+ 1}   d >  d^{RT*} > 0 $. 
As the number of virtual bidders $V$ goes to infinity, the total day-ahead load approaches the total load and the spread converges to zero.
\end{corollary}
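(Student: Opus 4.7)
The plan is to obtain the totals in \eqref{eq:virtualgameDAclearedload} by aggregating the equilibrium first-order conditions over the two player types, then read off the bounds and the limiting behavior directly from the resulting closed-form expression.

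First, I would extend the first-order analysis from the proof of Theorem~3 to the enlarged Cournot game with $L+V$ players. For each real load $l\in\mathcal{L}$, the first-order condition retains the same form as \eqref{eq:gameeq1}, but with the sum now ranging over all $L+V-1$ competitors. For each virtual bidder $v\in\mathcal{V}$, the demand vanishes ($d_v=0$); writing the expenditure as $c_v=(\lambda^{DA}-\lambda^{RT})d_v^{DA}=-\alpha^{RT}(d-d^{DA})d_v^{DA}$ and setting its derivative with respect to $d_v^{DA}$ to zero yields the symmetric relation $d_v^{DA*}=d^{RT*}$ for every $v$, so that $\sum_{v\in\mathcal{V}} d_v^{DA*}=V d^{RT*}$.

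Next, I would sum the real-load first-order conditions over $l\in\mathcal{L}$ to obtain one aggregate equation involving $\alpha^{DA}\sum_l d_l=\alpha^{DA}d$, $d^{RT*}$, and $\sum_v d_v^{DA*}$. Substituting the virtual-bidder relation $\sum_v d_v^{DA*}=Vd^{RT*}$ collapses this to a single linear equation whose unique solution is $d^{RT*}=\tfrac{\alpha^{DA}}{(L+V+1)\alpha^{RT}}\,d$, and $d^{DA*}=d-d^{RT*}$ then matches \eqref{eq:virtualgameDAclearedload}. The same totals can alternatively be recovered by summing \eqref{eq:virtualgameeq} over $\mathcal{V}$ and combining with the parallel sum over $\mathcal{L}$ derived by extending the equilibrium characterization of Theorem~3 from $L$ to $L+V$ players.

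Finally, since $0<\alpha^{DA}<\alpha^{RT}$ as noted in Remark~1, the coefficient $\tfrac{\alpha^{DA}}{(L+V+1)\alpha^{RT}}$ lies strictly in $(0,\tfrac{1}{L+V+1})$, which immediately gives the claimed strict inequalities $\tfrac{L+V}{L+V+1}\,d<d^{DA*}<d$ and $0<d^{RT*}<\tfrac{1}{L+V+1}\,d$. Letting $V\to\infty$ drives this coefficient to zero, so $d^{DA*}\to d$ and $d^{RT*}\to 0$; applying \eqref{eq:spread} then sends the price spread $\lambda^{DA}-\lambda^{RT}$ to zero. The main obstacle I anticipate is correctly setting up the virtual bidder's expenditure function (noting that $d_v^{RT}=-d_v^{DA}$ converts it into a profit from the price differential) and verifying that the symmetric first-order relation $d_v^{DA*}=d^{RT*}$ is indeed the unique equilibrium condition; once that is in place, the aggregation to a single linear equation is routine and parallels the derivation used for Theorem~3 and Corollary~\ref{cor:multiloadeq}.
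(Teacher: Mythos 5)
Your proposal is correct and follows essentially the same route the paper intends: the paper leaves this corollary's proof implicit (sum the equilibrium expressions of the virtual-bidding theorem together with the $L+V$-player analogue of the load equilibrium), and your aggregation of the first-order conditions, using the correctly derived virtual-bidder best response $d_v^{DA*}=d^{RT*}$ (which indeed matches \eqref{eq:virtualgameeq}), is just the direct way to carry that out. The convexity check you flag is immediate since $c_v=-\alpha^{RT}\left(d-d^{DA}_{-v}-d_v^{DA}\right)d_v^{DA}$ is strictly convex in $d_v^{DA}$, and the nonnegativity constraint is non-binding at $d_v^{DA*}=d^{RT*}>0$, so no gap remains.
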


\begin{remark}
	Virtual bidders have the incentive to arbitrage over the negative spread resulting from the strategic behavior of load participants, which in turn contributes to alleviating the loss of market efficiency  by driving the two-stage market prices to equalize.
\end{remark}
\begin{remark}\label{rmk:realload}
From \eqref{eq:virtualgameDAclearedload}, the real demand from load participants in the day-ahead market remains positive but actually decreases in the number of virtual bidders $V$, as captured below:
\beq\label{eq:virtualgamerealload}
\sum_{l\in\mathcal{L}} d_l^{DA*}  = \left(1 -\frac{(V+1)\alpha^{DA}}{(L+V+1)\alpha^{RT}}\right)   \sum_{l\in\mathcal{L}} d_l  .
\eeq

\end{remark}

\vspace{0.15in}

\section{Real-World Data Validation}\label{sec:validation}

We next employ real-world electricity market data from NYISO to verify the extent to which our model and analysis reflect real market conditions.

\subsection{Electricity Market Model}

 Day-ahead and real-time loads and prices are collected for the whole year of 2018\footnote{Note that several periods of time, such as Jan. 1-9 and May 21-31, that exhibit extremely abnormal price elasticity of supply are removed.}. Uniform \emph{energy clearing prices} are adopted instead of locational marginal prices since emphasis of our analysis is on the two-stage settlement mechanism rather than the physical constraints of power networks.
Fig.~\ref{fig:DA} is a scatterplot of day-ahead prices with respect to day-ahead loads. As \eqref{eq:DAprice} suggests, a day-ahead price should be linear in the corresponding day-ahead load. The linear regression result in Table~\ref{tab:DA} shows that both of the pricing coefficients $\alpha^{DA}$ and $\beta^{DA}$ are statistically significant.
Fig.~\ref{fig:RT} is a scatterplot of negative spreads, i.e., $\lambda^{RT}-\lambda^{DA}$, with respect to real-time loads to justify the connection between the day-ahead and real-time prices, identified in \eqref{eq:spread}. A multiple linear regression of real-time prices on real-time loads and day-ahead prices is carried out yielding the result in Table~\ref{tab:RT}, which confirms that the linearity approximately holds. As analyzed, the coefficient $\gamma$ for day-ahead prices is almost 1 and $\alpha^{RT} > \alpha^{DA}$ is observed. 
However, the proposed model cannot account for the negative intercept $\delta$. 
This could be caused by factors that our analysis neglects, such as strategic generation.


\begin{figure}[thpb]
	\centering
	\includegraphics[width=0.45\textwidth]{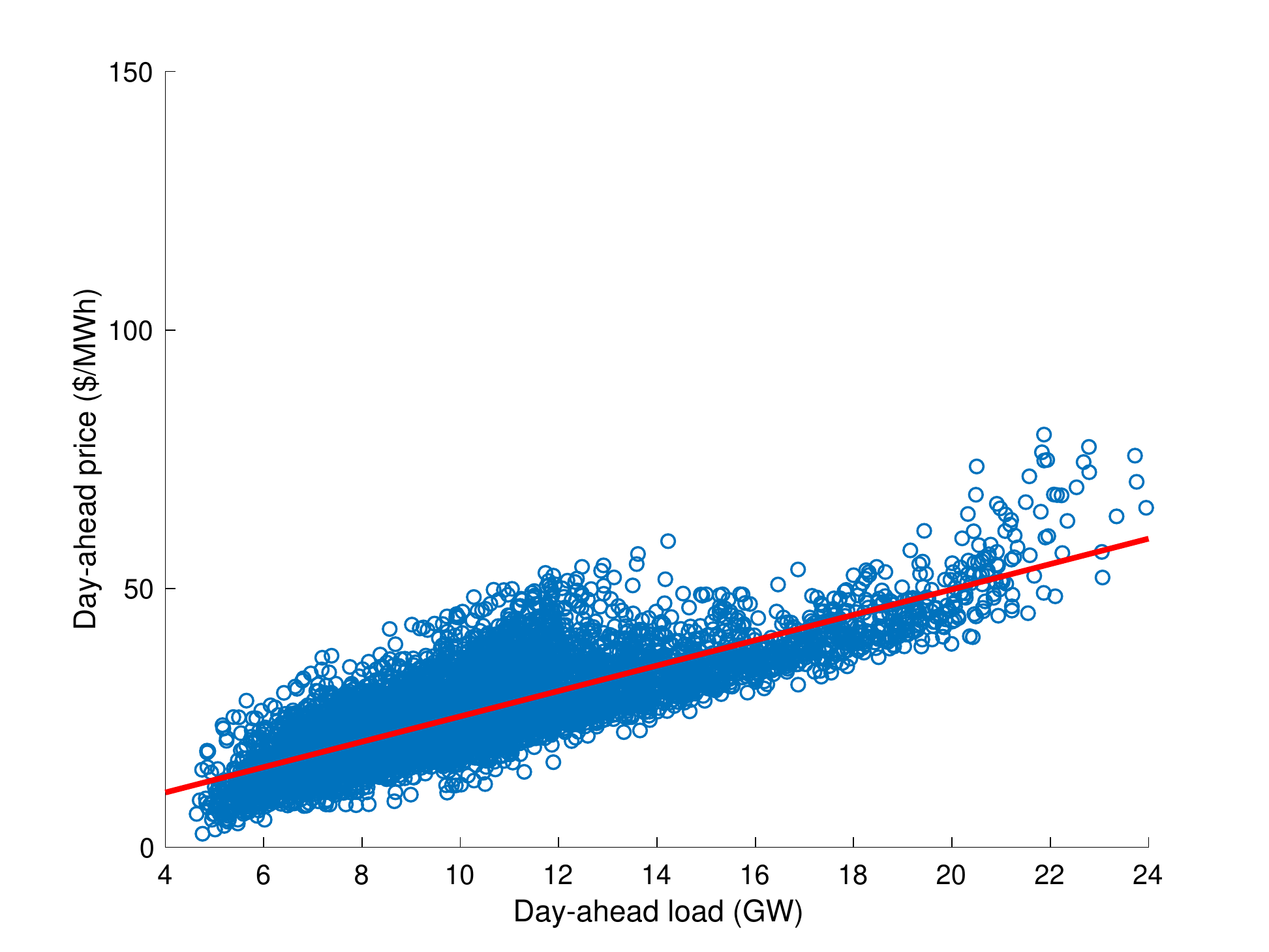}
	\caption{Day-ahead price with respect to day-ahead load, NYISO, 2018.}
	\label{fig:DA}
\end{figure}

\begin{table}[]
	\caption{Linear regression for $\lambda^{DA} = \alpha^{DA} d^{DA} + \beta^{DA}$}
	\label{tab:DA}
	\begin{center}
	\begin{tabular}{|c|c|c|c|c|c|}
		\hline
		& Estimate & Standard error & $p$-value & RMSE & $R^2$  \\ \hline
		 $\alpha^{DA}$ &2.4535  & 0.0208  &  $<$ 0.001  & \multirow{2}{*}{5.7128 } & \multirow{2}{*}{0.6518} \\ \cline{1-4}
		$\beta^{DA}$  &	0.7848 & 0.2253 & 	 $<$ 0.001	&                 &                   \\ \hline
	\end{tabular}
\end{center}
\end{table}

\begin{figure}[thpb]
	\centering
	\includegraphics[width=0.45\textwidth]{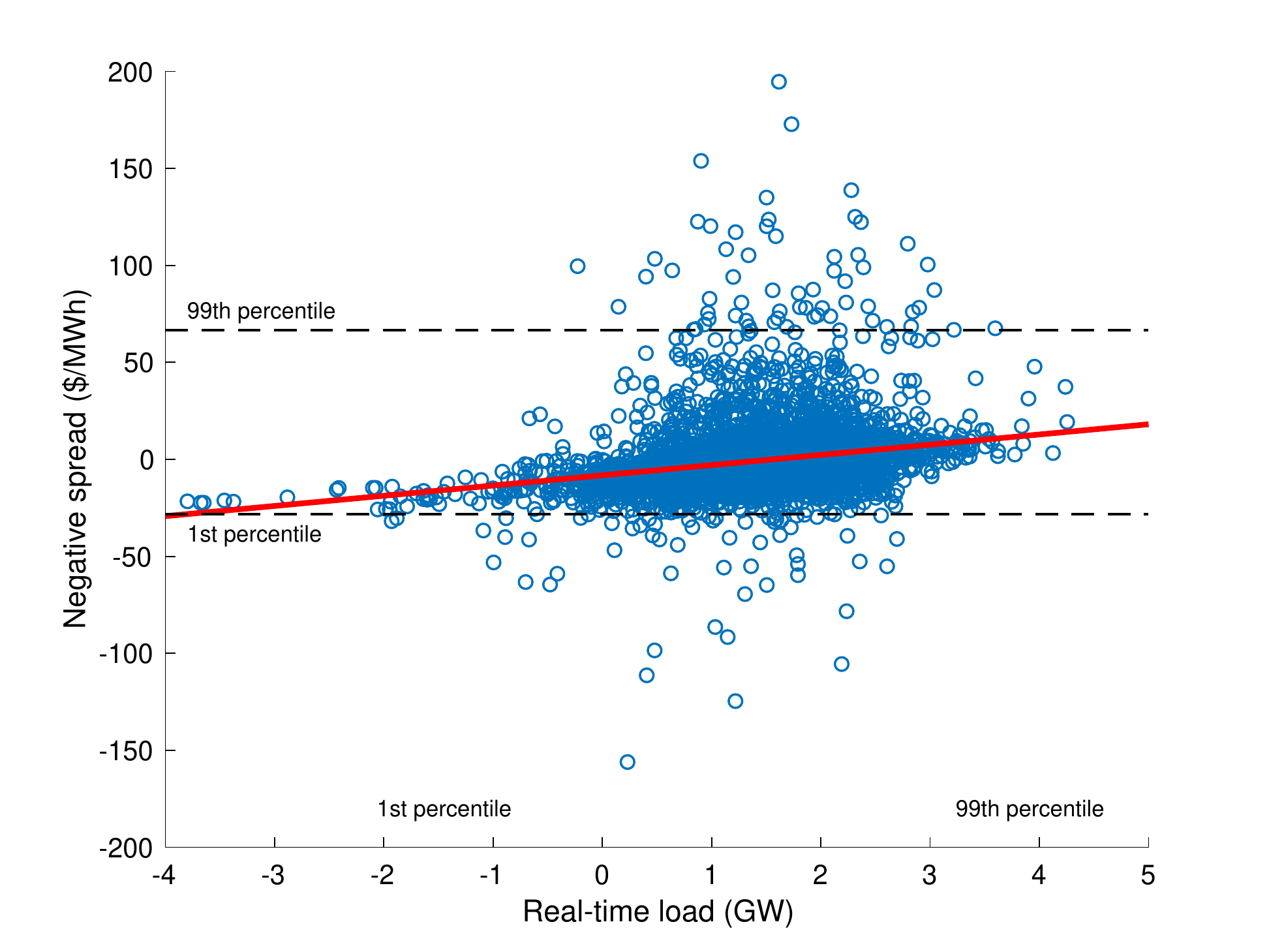}
	\caption{Negative spread with respect to real-time load, NYISO, 2018.}
	\label{fig:RT}
\end{figure}

\begin{table}[]
	\caption{Linear regression for $\lambda^{RT} = \alpha^{RT} d^{RT} + \gamma \lambda^{DA}  + \delta $}
	\label{tab:RT}
	\begin{center}
		\begin{tabular}{|c|c|c|c|c|c|}
			\hline
			 & Estimate & Standard error & $p$-value & RMSE & $R^2$  \\ \hline
			$\alpha^{RT}$ & 5.2658 & 0.1833 &  $<$ 0.001  & \multirow{3}{*}{10.2941 } & \multirow{3}{*}{0.4444} \\ \cline{1-4}
			$\gamma$ &1.0009  & 0.0132 & $<$ 0.001 & &  \\\cline{1-4}
			$\delta$  &	-8.2569 & 0.4980 & 	 $<$ 0.001	&                 &                   \\ \hline
		\end{tabular}
	\end{center}
\end{table}

\subsection{Virtual Bidding}

To validate our analysis of strategic load participants, we assess the special case of virtual bidding due to its significant and verifiable impact on market clearing. The mechanism of virtual bidding was officially introduced into the NYISO market in November, 2001 \cite{NYISOVB}. We collected available data of real loads cleared in the day-ahead market and total actual loads for the several months around that time point to validate the deduction in Remark \ref{rmk:realload}.
It is reasonable to assume $V=0$ prior to the introduction of virtual bidding while $V > 0$ thereafter. As a result, the proportion $1 -\frac{(V+1)\alpha^{DA}}{(L+V+1)\alpha^{RT}}$ is anticipated to diminish with virtual bidding introduced, which is precisely captured by the sudden drop in Fig.~\ref{fig:VB}\footnote{Note that the total load in the NYISO market includes a significant part that is cleared through bilateral transactions outside the market. Therefore, the overall percentage is low. }.

\begin{figure}[thpb]
	\centering
	\includegraphics[width=0.45\textwidth]{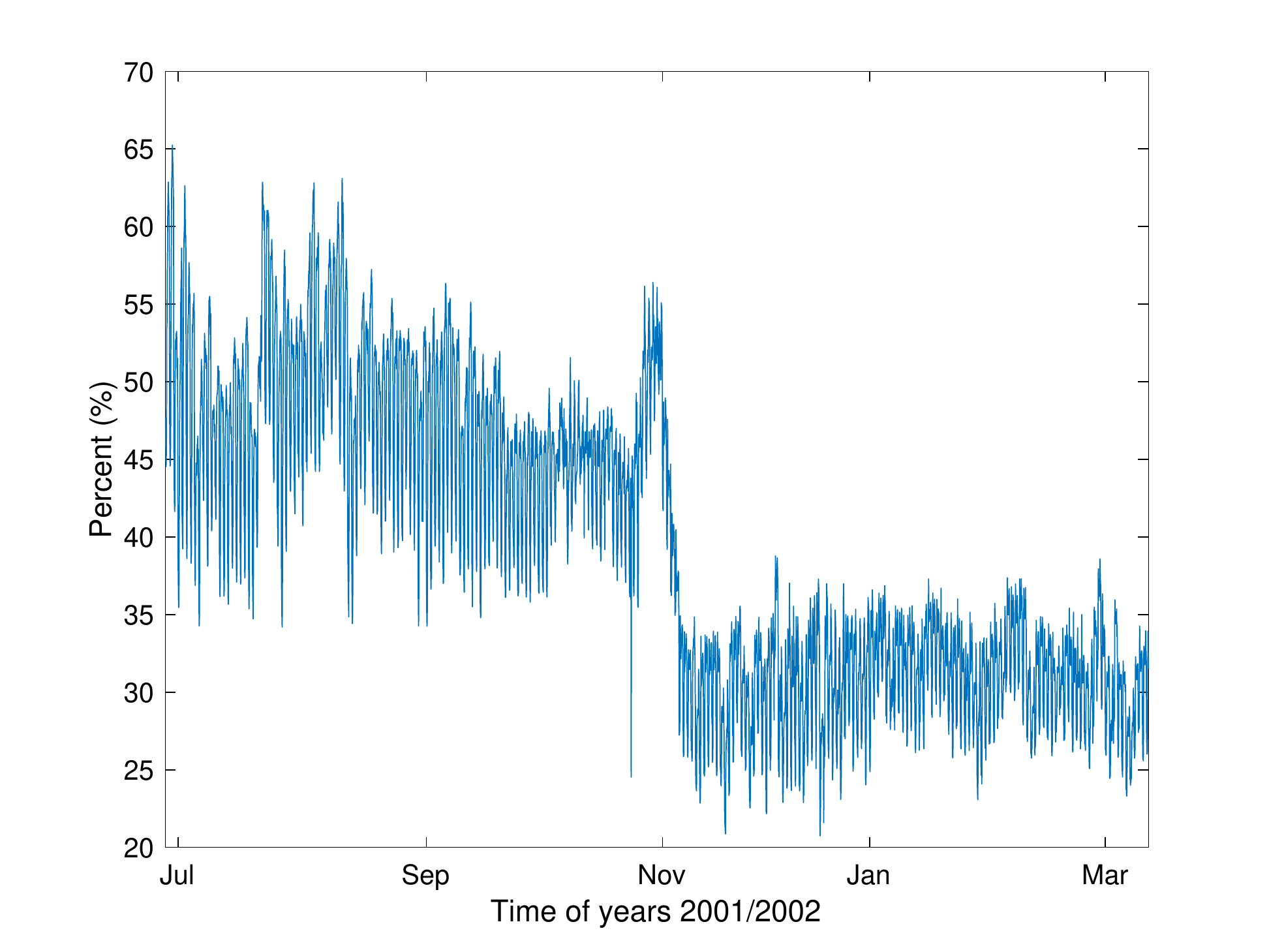}
	\caption{Percentage of day-ahead real load in total load, NYISO, 2001/2002. The introduction of virtual bidding in November, 2001 caused a sudden drop in this percentage.}
	\label{fig:VB}
\end{figure}


\section{Concluding Remarks}\label{sec:conclusion}

This paper develops a model for two-stage settlement electricity markets that explicitly characterizes the interconnection between day-ahead and real-time markets.
Given the model, we attribute systematic negative spreads in electricity markets to the strategic behavior of inelastic load participants that takes advantage of the two-stage settlement mechanism.
We therefore argue that strategic load participation in electricity markets should be taken into account in the characterization of nonzero spreads,  
in addition to empirical factors like load forecast errors or market power of strategic generators. 
Our analysis generalizes to accommodate virtual bidding and demonstrates its role in improving market efficiency by mitigating market power. Real-world market data from NYISO support our theory.

Our model and analysis focus on strategic behavior by inelastic load participants only and are thus not able to account for other factors that can also result in loss of market efficiency. A more comprehensive framework is the subject of ongoing work.



\Urlmuskip=0mu plus 1mu\relax
\bibliographystyle{ieeetr}
\bibliography{ref}


\appendix

\section*{Proof of Lemma \ref{lemma:gameeq}}\label{apx:lemma}

\begin{proof}
	We prove this lemma by contradiction.
	Assume there exists a Nash equilibrium where $d_l^{DA*}=0$ for some $l\in \mathcal{L}$. 
	Define $\mathcal{L}'$ as the set of the remaining loads with $d_l^{DA*}>0$ and let $L':=|\mathcal{L}'|$. 
	Note that the expenditure function $c_l(\vec d_l;\vec{d}_{-l})$ of each load $l\in\mathcal{L}$ can be rewritten as a strictly convex function in terms of $d^{DA}_l$ only; see \eqref{eq:expenditurefunc}.
	For each load $l\in\mathcal{L}\backslash \mathcal{L}'$ with $d_l^{DA*}=0$, the first-order optimality condition does not hold, i.e.,
	\beq\label{eq:gameeqproof3}
		d_l^{DA*} = 0
		\ge  ( 1-\frac{\alpha^{DA}}{2\alpha^{RT}} )d_l   +  \frac{1}{2}  \sum_{k\in\mathcal{L}\backslash \{l\}} (d_k-d_k^{DA*}) 
		=  \frac{\alpha^{RT}- \alpha^{DA}}{2\alpha^{RT}} d_l   +  \frac{1}{2} \sum_{k\in\mathcal{L}}  d_k-\frac{1}{2}\sum_{k\in\mathcal{L}'} d_k^{DA*}  .
	\eeq
	
	However, the first-order optimality condition holds for the loads $l\in\mathcal{L}'$, which can be expressed as 
	\beq\label{eq:gameeqproof1}
	d_l^{DA*} = \frac{\alpha^{RT}- \alpha^{DA}}{2\alpha^{RT}} d_l   +  \frac{1}{2} \sum_{k\in\mathcal{L}}  d_k-\frac{1}{2}\sum_{k\in\mathcal{L}'\backslash   \{l\}} d_k^{DA*}  ,
	\eeq
	where we replace $\mathcal{L}$ with $\mathcal{L}'$ since $d_l^{DA*}=0,l\in\mathcal{L}\backslash \mathcal{L}'$.
	Summing \eqref{eq:gameeqproof1} over $\mathcal{L}'$ and reorganizing the expression lead to
	\beq\label{eq:gameeqproof2}
		\sum_{l\in \mathcal{L}'} d_l^{DA*} 
		=  \left(1 -\frac{\alpha^{DA}}{(L'+1)\alpha^{RT}}\right)\! \sum_{l\in\mathcal{L}'} d_l  +  \frac{L'}{L'+1} \sum_{l\in\mathcal{L}\backslash \mathcal{L}' }d_l 
		\ < \  \sum_{l\in\mathcal{L}} d_l  -   \frac{\alpha^{DA}}{(L'+1)\alpha^{RT}} \sum_{l\in\mathcal{L}'} d_l  \ < \    \sum_{l\in\mathcal{L}} d_l  .
	\eeq
	
	Substituting \eqref{eq:gameeqproof2} into \eqref{eq:gameeqproof3} yields a contradiction:
	\beq\label{eq:gameeqproof4}
	0 \ge \frac{\alpha^{RT}- \alpha^{DA}}{2\alpha^{RT}} d_l   +  \frac{1}{2} \sum_{k\in\mathcal{L}}  d_k-\frac{1}{2}\sum_{k\in\mathcal{L}'} d_k^{DA*}  > 0 ,
	\eeq
	where the second inequality also uses $\alpha^{RT} > \alpha^{DA}$; recall Fig.~\ref{fig:pricecorrelation}. The preliminary assumption is rejected and therefore the lemma is proved.	
\end{proof}


%
%
%
%

\end{document}